\newdimen\nodeDist
\newtheorem{defin}{}
\newtheorem{saetze}[defin]{}
\newtheorem{conjec}[defin]{}
\newtheorem{lemmas}[defin]{}
\newtheorem{folger}[defin]{}
\newtheorem{bemerk}[defin]{}
\newenvironment{theorem}  {\begin{saetze}\it {\bf Theorem:}}{\end{saetze}}
\newenvironment{proof}    {{\it Proof}:}{{\hfill \fillbox \bigskip}}
\newcommand{\fillbox}{\mbox{$\bullet$}}
\renewcommand{\j}{\ol{j}}
\newcommand{\ol}{\overline}
\newcommand{\Z}{\mathbb Z}
\newcommand{\GAP}{\textsc{Gap}}
\newcommand{\LiePRing}{\textsc{LiePRing}}
\renewcommand{\Im}{\mathrm{Im}}
\newenvironment{items}{\begin{list}{$\alph{item})$}
{\labelwidth18pt \leftmargin18pt \topsep3pt \itemsep1pt \parsep0pt}}
{\end{list}}
\newcommand{\bulit}{\item[$\bullet$]}
\begin{document}

\title{Symbolic computation of Schur multipliers with
       an application to the groups of order dividing $p^6$}
\author{Bettina Eick and Taleea Jalaeeyan Ghorbanzadeh}
\date{\today}
\maketitle

\begin{abstract}
We describe an algorithm to compute the Schur multipliers of all nilpotent
Lie $p$-rings in the family defined by a symbolic nilpotent Lie $p$-ring. 
Symbolic nilpotent Lie $p$-rings can be used to describe the isomorphism 
types of $p$-groups of order $p^n$ for $n \leq 7$ and all primes $p \geq n$. 
We apply our algorithm to compute the Schur multipliers of all $p$-groups 
of order dividing $p^6$.
\end{abstract}

\section{Introduction}

A Lie ring is an additive abelian group with a multiplication, denoted by
$[.,.]$, that is bilinear, alternating and satisfies the Jacobi-identity.
A Lie $p$-ring is a Lie ring with $p^n$ elements for some prime power $p^n$.
A Lie $p$-ring is nilpotent, if its lower central series terminates at 
$\{0\}$. A nilpotent Lie $p$-ring $L$ of order $p^n$ can be described by a 
presentation $P(A)$ on $n$ generators $b_1, \ldots, b_n$ with coefficients 
$A = (a_{ijk}, a_{ik} \mid 1 \leq i < j < k \leq n)$, where $a_{ijk}$ and 
$a_{ik}$ in the field $\Z_p = \{ 0, \ldots, p-1 \}$, and relations 
\begin{eqnarray*}
  [b_i, b_j] &=& \sum_{k=j+1}^n a_{ijk} b_k 
        \;\; \mbox{ for } \;\; 1 \leq i<j \leq n, \\
   p b_i &=& \sum_{k=i+1}^n a_{ik} b_k 
        \;\; \mbox{ for } \;\; 1 \leq i \leq n.
\end{eqnarray*}

We generalize this type of presentation: we assume that $p$ 
is an indeterminate and the elements $a_{ijk}$ and $a_{ik}$ are polynomials 
in a polynomial ring $\Z[w, x_1, \ldots, x_m]$. Then $P(A)$ is a {\em 
symbolic} nilpotent presentation with respect to a set of primes $\Pi$ if 
for each $p \in \Pi$ and each $x_1, \ldots, x_m \in \Z_p$ the presentation
$P(A)$ evaluated at these values and $w$ being a primitive root mod $p$ 
defines a nilpotent Lie $p$-ring of order $p^n$. In this case we say that 
$P(A)$ defines a {\em symbolic} nilpotent Lie $p$-ring. 

The first aim here is to describe an algorithm to compute the Schur 
multipliers of the nilpotent Lie $p$-rings described by a symbolic 
nilpotent Lie $p$-ring. This algorithm translates, modifies and 
generalizes the method to compute the Schur multiplier of a polycyclic 
group as introduced by Eick \& Nickel \cite{ENi06}. 

Newman, O'Brien \& Vaughan-Lee \cite{NOV04, OVL05} determined a 
classification up to 
isomorphism of the nilpotent Lie $p$-rings of order dividing $p^7$. This 
is available in the LiePRing package \cite{liepring} of GAP \cite{GAP}
in the form of finitely many symbolic nilpotent presentations for
Lie $p$-rings. We apply our algorithm to this classification and hence
obtain a complete list of the Schur multipliers of the nilpotent Lie 
$p$-rings of order dividing $p^6$. The following table lists for each
$p^n$ with $1 \leq n \leq 6$ and all primes $p \geq 5$ the number 
of isomorphism types of nilpotent Lie $p$-rings of order $p^n$ with 
Schur multiplier isomorphic to $M$. A similar result for the primes 
$2$ and $3$ can be computed readily and is included in Section \ref{res} 
below. In the following table we use $A:=\gcd(p-1,3)$, $B:=\gcd(p-1,4)$ 
and $C:=\gcd(p-1,5)$.

\[\def\arraystretch{1.3}
\rowcolors{2}{gray!10}{white}
\begin{array}{lcccccc}
\toprule
$M$          & p^1 & p^2 & p^3 & p^4 & p^5 & p^6 \\
\midrule
\{0\}                & 1   & 1   & 2   & 2   & p+4 & p+3 \\
\Z_p                 &     & 1   & 1   & 5   & p+2A+B+14 & 6p+4A+B+18 \\
\Z_p^2               &     &     & 1   & 4   & 9   & 3p^2+19p+14A+7B+2C+42  \\
\Z_p^3               &     &     & 1   & 1   & 17  & 9p+6A+3B+106\\
\Z_p^4               &     &     &     & 1   & 4   & 4p+39  \\
\Z_p^5               &     &     &     &     & 4   & 52 \\
\Z_p^6               &     &     &     & 1   & 2   & 32 \\
\Z_p^7               &     &     &     &     & 1   & 4 \\
\Z_p^8               &     &     &     &     &     & 4 \\
\Z_p^9               &     &     &     &     &     & 6 \\
\Z_p^{10}            &     &     &     &     & 1   & 1 \\
\Z_p^{11}            &     &     &     &     &     & 1 \\
\Z_p^{15}            &     &     &     &     &     & 1 \\
\Z_{p^2}             &     &     &     & 1   & 3   & 6 \\
\Z_p + \Z_{p^2}      &     &     &     &     & 1   & 9 \\
\Z_p^2 + \Z_{p^2}    &     &     &     &     & 1   & 8 \\
\Z_p^3 + \Z_{p^2}    &     &     &     &     &     & 6 \\
\Z_p^4 + \Z_{p^2}    &     &     &     &     &     & 1 \\
\Z_p^5 + \Z_{p^2}    &     &     &     &     &     & 1 \\
\Z_{p^2}^2           &     &     &     &     &     & 1 \\
\Z_p + \Z_{p^2}^2    &     &     &     &     &     & 1 \\
\Z_{p^2}^3           &     &     &     &     &     & 1 \\
\Z_{p^3}             &     &     &     &     &     & 1 \\
\bottomrule
\end{array}
\]

The Lazard correspondence, see \cite{Laz54}, establishes an equivalence 
between the categories of nilpotent Lie $p$-rings of class less than $p$ 
and of $p$-groups of class less than $p$. This associates to any nilpotent 
Lie $p$-ring $L$ of class less than $p$ a $p$-group $G(L)$ of the same 
class. The morphisms in both categories are isomorphisms and thus a
classification up to isomorphism of the nilpotent Lie $p$-rings 
of order $p^n$ translates to the classification of groups of order $p^n$ 
for all $p \geq n$. 
In \cite{EHZ12} it is observed that the Schur multiplier $M(L)$ of a 
nilpotent Lie $p$-ring of class less than $p$ satisfies $M(L) = M(G(L))$.
Hence the above table is also a table of the Schur multipliers of the 
$p$-groups of order dividing $p^6$ for all primes $p \geq 5$.

The Schur multipliers of the groups of order dividing $p^5$ have been
determined in \cite{HKY18} using ad-hoc methods and in the unpublished
work \cite{HZa16} using an algorithmic approach. It was one motivation
for this work to exhibit a better algorithmic method that would also cover 
the groups of order $p^6$. 
\bigskip

{\bf Acknowledgement} We thank Max Horn for various discussions and reading 
and commenting an earlier draft of this work. 

\section{Preliminaries}

First, we recall that the multiplication of a Lie ring is
\begin{items}
\bulit
{\bf alternating:} $[x,x] = 0$ for $x \in L$,
\bulit
{\bf antisymmetric:} $[x,y] = -[y,x]$ for $x,y \in L$,
\bulit
{\bf bilinear}
$[x+y,z] = [x,z] + [y,z]$ and $[x,y+z] = [x,y] + [x,z]$,
for $x,y,z \in L$, and 
\bulit
{\bf Jacobi-identity}
$[x,[y,z]] + [z,[x,y]] + [y,[z,x]] = 0$
for $x,y,z \in L$.
\end{items}

The Schur multiplier of a group $G$ can be defined as the second homology 
group $H_2(G,\Z)$. For a Lie ring $L$, the Schur multiplier is defined as 
the second Chevalley-Eilenberg homology group of $L$. We recall the
following properties of Schur multipliers of nilpotent Lie $p$-rings from
\cite{EHZ12}.

\begin{theorem}
\label{schur}
Let $L$ be a nilpotent Lie $p$-ring and let $L = F/R$ for a free Lie ring
$F$.
\begin{items}
\bulit 
$M(L)$ is a finite abelian $p$-group.
\bulit
$M(L) \cong ([F,F] \cap R)/[F,R]$.
\bulit
$([F,F] \cap R)/[F,R]$ is the torsion subgroup of $R/[F,R]$.
\end{items}
\end{theorem}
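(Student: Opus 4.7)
The plan is to treat the three parts in order, with part (b) being the main technical step.

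For part (a), I would invoke the definition of the Schur multiplier as the second Chevalley-Eilenberg homology group $H_2(L,\Z)$, which is the homology of the complex
\[
\cdots \to \Lambda^3 L \to \Lambda^2 L \to L \to 0
\]
with the standard boundary maps built from the bracket. Since $L$ is finite of order $p^n$, each exterior power $\Lambda^k L$ is a finite abelian $p$-group, hence every subquotient, in particular $H_2(L,\Z)$, is a finite abelian $p$-group.

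For part (b), I would prove the Hopf formula for Lie rings. The cleanest route is to exploit that $F$ is a free Lie ring and thus has a Chevalley-Eilenberg complex of length $1$, so that $H_n(F,\Z) = 0$ for $n \geq 2$. Applying the Hochschild-Serre style five-term exact sequence (or a direct chase in the Chevalley-Eilenberg complex) to the extension $0 \to R \to F \to L \to 0$ produces the exact sequence
\[
H_2(F,\Z) \to H_2(L,\Z) \to R/[F,R] \to F/[F,F] \to L/[L,L] \to 0.
\]
With $H_2(F,\Z) = 0$, the map $H_2(L,\Z) \to R/[F,R]$ is injective, and a diagram chase identifies its image with the kernel of $R/[F,R] \to F/[F,F]$, which is precisely $([F,F]\cap R)/[F,R]$. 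This step is the main obstacle, since setting up the correct free-Lie-ring analog of Hopf's formula and verifying exactness requires care with the Chevalley-Eilenberg boundaries; I would follow the argument as reproduced in \cite{EHZ12}.

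For part (c), I would combine the previous two parts. Using the natural exact sequence
\[
0 \to ([F,F]\cap R)/[F,R] \to R/[F,R] \to R/([F,F]\cap R) \to 0,
\]
I observe that $R/([F,F]\cap R) \cong (R+[F,F])/[F,F]$ embeds into $F/[F,F]$, which is free abelian because $F$ is a free Lie ring. Hence the quotient $R/([F,F]\cap R)$ is torsion-free. On the other hand, part (a) together with (b) shows that $([F,F]\cap R)/[F,R] \cong M(L)$ is a finite $p$-group, hence pure torsion. Combining these two facts identifies $([F,F]\cap R)/[F,R]$ with the torsion subgroup of $R/[F,R]$, completing the proof.
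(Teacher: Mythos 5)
The paper offers no proof of this theorem at all: Theorem \ref{schur} is explicitly ``recalled'' from \cite{EHZ12}, so there is nothing in the text to compare your argument against beyond that citation. Judged on its own terms, your outline is the standard route and is essentially sound. Part (a) is immediate as you say: $\Lambda^k L$ taken over $\Z$ is a finite abelian $p$-group whenever the additive group of $L$ is, and $H_2(L,\Z)$ is a subquotient of $\Lambda^2 L$. Part (c) is a clean and complete deduction from (a) and (b): the quotient $R/([F,F]\cap R)\cong (R+[F,F])/[F,F]$ embeds in the free abelian group $F/[F,F]$ and is therefore torsion-free, while the kernel term is finite torsion, so it is exactly the torsion subgroup. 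The only load-bearing step is part (b), the Hopf formula, and there your sketch correctly names the two ingredients --- the vanishing $H_2(F,\Z)=0$ for a free Lie ring $F$ and the five-term exact sequence --- but these are precisely the points that require proof rather than assertion: the vanishing rests on the augmentation ideal of $U(F)$ being a free $U(F)$-module together with the fact that $F$ is free as a $\Z$-module (so that Chevalley--Eilenberg homology agrees with $\mathrm{Tor}^{U(F)}_*(\Z,\Z)$), and the five-term sequence for Chevalley--Eilenberg homology of Lie \emph{rings} over $\Z$, as opposed to Lie algebras over a field, must itself be verified, since $L$ is not $\Z$-free. You acknowledge this and defer to \cite{EHZ12}, which is exactly what the paper does; so the proposal is a correct sketch of the intended argument, but like the paper it leaves the central step to the reference.
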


\section{The Schur multiplier of a nilpotent Lie $p$-ring}
\label{algo1}

Let $L$ be a nilpotent Lie $p$-ring defined by a presentation $P(A)$.
Let $V_n$ denote the vector space of dimension $n(n+1)/2$ over $\Z_p$ 
with basis
\[B_n = \{ t_{ij} \mid 1 \leq i <  j \leq n\} \cup \{s_i \mid 1 \leq
i \leq n\}.\]
Define $t_{ii} = 0$ and $t_{ij} = -t_{ji}$ for $j < i$. Further, we 
introduce the following elements of $V_n$ for $1 \leq i, j, h \leq n$:
\begin{items}
\bulit
$u_i = \sum_{k=1}^n a_{ik} t_{ki}$,
\bulit
$v_{ij} = p t_{ij} + \sum_{k=1}^n (a_{ijk} s_k - a_{ik} t_{kj})$ 
     for $i \neq j$, and
\bulit
$w_{ijh} = \sum_{k=1}^n (a_{jhk} t_{ik} + a_{ijk} t_{hk} + a_{hik} t_{jk})$
     for $i < j < h$.
\end{items}

Let $Mat(A)$ denote the matrix whose rows consist of the coefficient vectors 
of the elements $u_i, v_{ij}, w_{ijh}$ with respect to the basis $B_n$.

\begin{theorem}
\label{schumu}
Let $P(A)$ define a nilpotent Lie $p$-ring of order $p^n$. Then 
the abelian invariants of the Schur muliplier $M(P(A))$ coincide 
with the the non-zero elementary divisors of $Mat(A)$.
\end{theorem}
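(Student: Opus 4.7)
The strategy is to use Theorem \ref{schur}, which identifies $M(L)$ with the torsion subgroup of $R/[F,R]$, and to give an explicit abelian-group presentation of $R/[F,R]$ whose relation matrix is exactly $Mat(A)$.

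First I would work in the Lie ring $F/[F,R]$, in which the image of $R$ is central because $[F,R]$ is killed by definition. Writing the images of the defining elements $r_{ij} := [b_i,b_j] - \sum_k a_{ijk} b_k$ and $r_i := p b_i - \sum_k a_{ik} b_k$ as central elements, which I will call $t_{ij}$ and $s_i$, I claim $R/[F,R]$ is generated as an abelian group by these elements: $R$ is generated as an ideal of $F$ by the $r_{ij}, r_i$, and modulo $[F,R]$ every iterated bracket $[x, r_{ij}], [x, r_i]$ is killed. Thus there is a surjective $\Z$-linear map $\phi : V_n \twoheadrightarrow R/[F,R]$, where $V_n$ is read as the free $\Z$-module on $B_n$ (with the antisymmetric convention $t_{ji}=-t_{ij}$).

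Next I would derive $u_i, v_{ij}, w_{ijh} \in \ker \phi$ by applying the Lie-ring axioms, which hold in $F/[F,R]$ because they hold in $F$: (i) $[b_i,b_i]=0$ combined with bilinearity gives $[b_i, p b_i]=0$; expanding $p b_i$ via the defining relation and each $[b_i,b_k]$ via $t_{ik}$ yields $u_i$. (ii) $p[b_i,b_j]=[p b_i, b_j]$ by bilinearity; expanding both sides produces $v_{ij}$. (iii) The Jacobi identity on $(b_i,b_j,b_h)$ with $i<j<h$, after replacing each inner bracket via its relation and using centrality of the tails, produces $w_{ijh}$. In every case the ``polynomial'' $b_l$-part vanishes in $F/[F,R]$ because the corresponding identity already holds in $L$, leaving exactly the claimed tail relation. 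Hence $\phi$ factors through $V_n/\langle u_i, v_{ij}, w_{ijh}\rangle$.

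The main step is to show no further relations are needed. For this I would construct a Lie ring $E$ with underlying abelian group $L \oplus K$, where $K := V_n/\langle u_i, v_{ij}, w_{ijh}\rangle$, equipped with the bracket that extends the bracket on $L$, places $K$ in the centre, and realises $[b_i, b_j]_E = \sum_k a_{ijk} b_k + t_{ij}$ and $p b_i = \sum_k a_{ik} b_k + s_i$ inside $E$. Verifying that this prescription actually defines a Lie ring is the crux: alternating, bilinearity and Jacobi for the basis elements translate precisely to the vanishing of $u_i, v_{ij}, w_{ijh}$ modulo the defining relations of $E$, so by construction of $K$ they hold. The hard part will be to organise a normal form on $E$ (for example, collecting each element uniquely as a $\Z$-combination of $b_1, \ldots, b_n$ followed by a tail in $K$) so that the verification is not circular; here one mirrors the collection arguments from \cite{ENi06}. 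Once $E$ is a Lie ring, the universal property of the free Lie ring gives a surjection $F \twoheadrightarrow E$ sending $b_i \mapsto b_i$; it carries $R$ onto $K$ and $[F,R]$ to $0$, yielding a map $R/[F,R] \to K$ which inverts $\phi$ on generators. Therefore $R/[F,R] \cong V_n/\langle u_i, v_{ij}, w_{ijh}\rangle$, and by Smith normal form its torsion subgroup, equal to $M(L)$ by Theorem \ref{schur}(c), has invariant factors equal to the non-zero elementary divisors of $Mat(A)$.
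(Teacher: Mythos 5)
Your proposal is correct and follows essentially the same route as the paper: reduce via Theorem \ref{schur} to presenting $R/[F,R]$ as an abelian group on $B_n$, extract the relations $u_i, v_{ij}, w_{ijh}$ from the three consistency identities (alternating, $p$-linearity, Jacobi), and argue that these are the only relations. Your explicit central extension $E = L \oplus K$ is exactly the ring the paper calls $L(A)$ (defined there by a presentation, with the isomorphisms $L(A) \cong F/[R,F]$ and $\langle B_n\rangle \cong R/[R,F]$ delegated to Lemma 1 of \cite{ENi06}), so you are filling in the same step the paper cites rather than taking a different path.
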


\begin{proof}
Let $F$ be the free Lie ring on $b_1, \ldots, b_n$ and write $P(A) = 
F/R$ for $R$ generated by the relations of $P(A)$. Our aim is to show 
that $R/[R,F] \cong \Z^{n(n+1)/2} / \Im(Mat(A))$. This yields the 
desired claim, as $M(P(A))$ is isomorphic to the torsion subgroup of 
$R/[R,F]$ by Theorem \ref{schur} and the non-zero elementary divisors 
of $Mat(A)$ describe exactly that, see Sims \cite[Sec. 3.8]{Sim94}.

Recall that $B_n = \{ t_{ij}, s_i \mid 1 \leq i < j \leq n \}$ has 
$n(n+1)/2$ elements. Let $L(A)$ denote the Lie ring generated by 
$\{ b_1, \ldots, b_n \} \cup B_n$ subject to the relations
\begin{eqnarray*}
[b_i, b_j] &=& \sum_{k=1}^n a_{ijk} b_k + t_{ij} \mbox{ for } i \neq j, \\
pb_i &=& \sum_{k=1}^n a_{ik} b_k + s_i, \\
t_{ij} \mbox{ and } s_i && \mbox{ central}.
\end{eqnarray*}
Let $\langle B_n \rangle$ denote the subring generated by $B_n$ in $L(A)$. 
Then similarly to Lemma 1 of \cite{ENi06} it follows that $L(A) \cong 
F/[R,F]$ and $\langle B_n \rangle \cong R/[R,F]$. 

We now determine a presentation for the abelian subgroup $\langle B_n 
\rangle$ of $L(A)$. Similar as in \cite{ENi06} this can be obtained by
evaluating 'consistency relations' in the generators $b_1, \ldots, b_n$.
These consistency relations in $L(A)$ are:
\begin{items}
\bulit
$[p b_i, b_i] = p [b_i, b_i] = 0$ for all $i$.
\bulit
$[p b_i, b_j] = p [b_i, b_j]$ for all $i \neq j$.
\bulit
$[b_i, [b_j, b_h]] + [b_h, [b_i, b_j]] + [b_j, [b_h, b_i]] = 0$ 
for all $i < j < h$.
\end{items}
We note that all these relations hold in $P(A)$, since $P(A)$ defines
a nilpotent Lie $p$-ring of order $p^n$ and hence is consistent. We 
evaluate in $L(A)$ for all $i,j,h$ and $i \neq \j$:
\begin{eqnarray*}
p [b_i, b_{\j}]
  &=& p \left(\sum_{k=1}^n a_{i \j k} b_k + t_{i \j }\right) \\
  &=& \sum_{k=1}^n a_{i \j k} \,p b_k + p t_{i \j } \\
  &=& \sum_{k=1}^n a_{i \j k} \left(\sum_{l=1}^n a_{kl} b_l + s_k\right) 
          + p t_{i \j }  \\
  &=& \sum_{l=1}^n \left(\sum_{k=1}^n a_{i \j k} a_{kl}\right) b_l 
          + \sum_{k=1}^n a_{i \j k} s_k + p t_{i \j } \\
\end{eqnarray*}
\begin{eqnarray*}
[p b_i, b_j] 
  &=& \left[ \sum_{k=1}^n a_{ik} b_k + s_i,\ b_j\right] \\
  &=& \sum_{k=1}^n a_{ik} [b_k, b_j]\\
  &=& \sum_{k=1}^n a_{ik} \left(\sum_{l=1}^n a_{kjl} b_l + t_{kj}\right) \\
  &=& \sum_{l=1}^n \left(\sum_{k=1}^n a_{ik} a_{kjl}\right) b_l 
       + \sum_{k=1}^n a_{ik} t_{kj} \\
\end{eqnarray*}
\begin{eqnarray*}
[b_i, [b_j, b_h]]
   &=& \left[b_i, \sum_{k=1}^n a_{jhk} b_k + t_{jh}\right] \\
   &=& \sum_{k=1}^n a_{jhk} [b_i, b_k] \\
   &=& \sum_{k=1}^n a_{jhk} \left(\sum_{l=1}^n a_{ikl} b_l + t_{ik}\right) \\
   &=& \sum_{l=1}^n \left(\sum_{k=1}^n a_{jhk} a_{ikl}\right) b_l 
        + \sum_{k=1}^n a_{jhk} t_{ik}
\end{eqnarray*}

Using that the relations hold in $P(A)$ and using the definitions for 
$u_i, v_{ij}$ and $w_{ijh}$ it now follows that
\begin{items}
\bulit
$u_i = [pb_i, b_i]$, 
\bulit
$v_{ij} = p[b_i,b_j] - [pb_i,b_j]$, and 
\bulit
$w_{ijh} = [b_j, [b_j,b_h]] + [b_j, [b_h,b_i]] + [b_h, [b_i,b_j]]$.
\end{items}
Hence $u_i = v_{ij} = w_{ijh} = 0$ in $L(A)$ and it follows that 
$R/[R,F] \cong \Z^{n(n+1)/2} / \Im(Mat(A))$ as desired.
\end{proof}

\section{Schur multipliers for symbolic nilpotent Lie $p$-rings}
\label{algo2}

Let $P(A)$ be a symbolic nilpotent Lie $p$-ring; that is, the prime
$p$ of $P(A)$ is indeterminate and the elements $a_{ijk}$ and $a_{ij}$
of $A$ are integral polynomials in the indetermines $w, x_1, \ldots, x_m$.

Then $P(A)$ defines a family of nilpotent Lie $p$-rings of order $p^n$.
Our aims are to determine the Schur multipliers 
for {\em all} members of the family simultaneously. The principal
approach towards this aim is very similar to the algorithm described
in Section \ref{algo1}. 

Given $P(A)$, we first determine the matrix $Mat(A)$. This can be read 
off readily from $A$ and it is a matrix with entries in 
$\Z[p,w,x_1, \ldots, x_m]$. Our aim now reduces to computing the
elementary divisors of $Mat(A)$ for all possible evaluations of the
indeterminates simulaneously.

We recall the computation of elementary divisors here briefly. Elementary
row (column) operations of a matrix are:
\begin{items}
\bulit
Interchanging two rows (columns),
\bulit
Adding a multiple of one row (column) to another,
\bulit
Multiplying one row (column) by a unit in the underlying ring.
\end{items}

Let $S = \Z[w, x_1, \ldots, x_m]$ and let $Q = Quot(S)$ the set of rational 
functions in $w, x_1, \ldots, x_m$ over $\Z$. An element in $Q[p]$ has the
form $s_n p^n + \ldots + s_1 p + s_0$ with $s_n, \ldots, s_0 \in Q$. We 
call such an element a {\em pseudo-unit} if $s_0 \neq 0$. Note that every 
non-zero element in $Q[p]$ can be written as $p^l u$ for a pseudo-unit $u$. 

We now perform a Smith normal form computation on $Mat(A)$ by allowing
elements in $Q[p]$ as entries in the matrix and by using pseudo-units
in the third case of elementary operations. Whenever we divide by a
pseudo-unit in the algorithm, we store the used pseudo-unit and return
the resulting list of pseudo-units together with the elementary divisors
of $Mat(A)$. Note that the elementary divisors are all powers of $p$ as
a result.

A pseudo-unit $u$ is a rational function over $S = \Z[w, x_1, \ldots, x_m]$.
Let $E_u = \{ (x_1, \ldots, x_m) \in \Z_p^m \mid u(w, x_1, \ldots, x_m) = 0\}$
the (finite) set of zeros of $u$. Then for all elements outside $E_u$ the
pseudo-unit $u$ yields a proper unit in $\Z_p$. Hence for all elements 
outside 
\[ E = \bigcup_{u \mbox{ pseudo-unit}} E_u \]
we can determine the Schur multiplier of $P(A)$ from $Mat(A)$ via Theorem
\ref{schumu}. It remains to consider the finite set $E$. For each tuple
in $E$ we evaluate the presentation $P(A)$ in the tuple and then determine
the Schur multiplier with the method of the previous section in the 
evaluated presentation.

\section{Examples}

We exhibit two examples. Both refer to the library of Lie $p$-rings of
dimension $6$ in the \LiePRing\ package of \GAP. The first example gives
a first introduction into the algorithm. The second example is the most
difficult case for our algorithm among all Lie $p$-rings of dimension $6$.

\subsection{First example}

As a first example, we consider the parametrised Lie $p$-ring $L$ number 
$245$ of dimension $6$. This defines a family of $p-1$ Lie $p$-rings 
with parameter $x \in \{1, \ldots, p-1\}$. 

\begin{verbatim}
gap> L := LiePRingsByLibrary(6)[245];
<LiePRing of dimension 6 over prime p with parameters [ x ]>
gap> NumberOfLiePRingsInFamily(L);
p-1
gap> LibraryConditions(L);
[ "x ne 0", "" ]
\end{verbatim}

We now apply the algorithm of Section \ref{algo2}.

\begin{verbatim}
gap> LiePSchurMultGen(L);
rec( norm := [ p, p ], pseudounits := [ -x^2-x ] )
\end{verbatim}

The output asserts that $M(F) = \Z_p^2$ for all Lie $p$-rings $F$ in the
family defined by the symbolic nilpotent Lie $p$-ring $L$ with the possible
exception of $L$ evaluated at the places where $-x^2-x = 0$. Clearly,
$-x^2-x = 0$ if and only if $x \in \{0,-1\}$. The case $x=0$ is excluded
in the family defined by $L$. Hence it remains to consider the case 
$x = -1$. To consider this special case, we evaluate the Lie $p$-ring
presentation for $L$ at this place and then apply the algorithm of 
Section \ref{algo2} again.

\begin{verbatim}
gap> S := SpecialiseLiePRing(L, false, [x], [-1]);
<LiePRing of dimension 6 over prime p>
gap> LiePSchurMultGen(S);
rec( norm := [ p, p, p ], unit := [  ] )
\end{verbatim}

In summary, this parametrised Lie $p$-ring yields $p-2$ Lie $p$-rings 
with Schur multiplier $\Z_p^2$ and 1 Lie $p$-ring with Schur multiplier 
$\Z_p^3$.

\subsection{Second example}

The second example is the Lie p-ring $L$ with number $267$ of dimension $6$.
This has four parameters $x, y, z$ and $t$ and it defines a family containing
$(2p^2+p-(p-1,4)+1)/2$ Lie $p$-rings. 

\begin{verbatim}
gap> L := LiePRingsByLibrary(6)[267];
<LiePRing of dimension 6 over prime p with parameters [ w, x, y, z, t ]>
gap> NumberOfLiePRingsInFamily(L);
p^2+1/2*p-1/2*(p-1,4)+1/2
gap> LibraryConditions(L);
[ "See note 6.178", "" ]
\end{verbatim}

The LibraryConditions of a parametrised Lie $p$-ring specify which values
of the parameters yield a complete and irredundant set of isomorphism types 
of Lie $p$-rings in this family. In the considered case, the parameters 
$x,y,z,t$ are elements of $\{0, \ldots, p-1\}$ so that
\[ A=\left( \begin{array}{cc} t & x \\ y & z \end{array} \right) \]
is non-singular modulo $p$. Two such parameter matrices $A$ and 
$B$ define isomorphic algebras if and only if
\[ B=\frac{1}{\det P}PAP^{-1} \bmod p \]
for some matrix $P$ of the form
\[ \left( \begin{array}{ll} \alpha  & \beta  \\ e \omega \beta  & e \alpha 
\end{array} \right) \]
with $e = \pm 1$, $\omega$ is a primitive element modulo $p$ and $\alpha^2 
- \omega \beta^2 \neq 0$ modulo $p$. The set of all matrices $P$ forms a 
subgroup $U$ of $GL(2,p)$ of size $2(p^2-1)$. We apply the algorithm of
Section \ref{algo2} to $L$.

\begin{verbatim}
gap> LiePSchurMultGen(L);
rec( norm := [ p, p ], unit := [ t, y, x*y*z+x*y*t-z^2*t-z*t^2 ] )
\end{verbatim}

Hence the algorithm determines three pseudo-units in this case. One of
them is $t$. As noted by Vaughan-Lee, every orbit of $U$ on the set of 
parameter matrices contains a matrix with $t \in \{0,1\}$. Hence it 
is sufficient to consider the cases $t = 0$ and $t=1$.

\begin{verbatim}
gap> S:=SpecialiseLiePRing(L, false, [t], [0]);;
gap> LiePSchurMultGen(S);
rec( norm := [ p, p ], unit := [ x, -y*z ] )

gap> S := SpecialiseLiePRing(L, false, [t], [1]);;
gap> LiePSchurMultGen(S);
rec( norm := [ p, p ], unit := [ x, x*y*z+x*y-z^2-z ] )
\end{verbatim}

We now analyse the occuring sets of pseudo-units. As $A$ is non-singular,
the case $t=0$ implies $x,y \neq 0$. Thus the pseudo-units reduce to $z$
and it remains to consider $(t,z) = (0,0)$ as special case. If $t=1$, then
the pseudo-units are $\{x, (z+1)(xy-z) \}$. As $A$ is non-singular, it 
follows that $xy \neq z$. Hence it remains to consider the special cases
$(t,x) = (1,0)$ and $(t,z) = (1,-1)$.

\begin{verbatim}
gap> S := SpecialiseLiePRing(L, false, [t,z], [0,0]);;
gap> LiePSchurMultGen(S);
rec( norm := [ p, p, p ], unit := [ x ] )

gap> S := SpecialiseLiePRing(L, false, [t,x], [1,0]);;
gap> LiePSchurMultGen(S);
rec( norm := [ p, p ], unit := [ z, z+1 ] )

gap> S := SpecialiseLiePRing(L, false, [t,z], [1,-1]);;
gap> LiePSchurMultGen(S);
rec( norm := [ p, p, p ], unit := [ ] )
\end{verbatim}

If $(t,z)=(0,0)$, then $x \neq 0$, and if $(t,x) = (1,0)$, then $z
\neq 0$, as $A$ is invertible. The case $(t,x,z) = (1,0,-1)$ is
covered by the third case. Hence the remaining pseudo-units can all
be considered as proper units. 

Note that there are $p-1$ Lie $p$-rings in the family defined by $L$ 
with $(t,z) = (0,0)$ and $(p+1)/2$ with $(t,z) = (1,-1)$. In summary,
$L$ yields $(2p^2-2p-(p-1,4)+2)/2$ Lie $p$-rings with Schur multiplier 
$\Z_p^2$ and $(3p-1)/2$ Lie $p$-rings with Schur multiplier $\Z_p^3$. 
The following graph visualizes the case-distinctions in this example.
Cases that do not play a role due to library conditions are indicated
by ``N/A''.

\begin{center}
\begin{tikzpicture}[scale=0.73,font=\footnotesize,
    node/.style={%
    },
  ]

    \node [node] (top) at (0,6) {$(p,p)$};
    \node [node] (t0) at (-3,4) {$(p,p)$};
    \node [node] (t1) at (+3,4) {$(p,p)$};

    \node [node] (t0 x0) at (-5,2) {N/A};
    \node [node] (t0 y0) at (-3,2) {N/A};
    \node [node] (t0 z0) at (-1,2) {$(p,p,p)$};

    \node [node] (t1 x0) at (+1,2) {$(p,p)$};
    \node [node] (t1 z=xy) at (+3,2) {N/A};
    \node [node] (t1 z=-1) at (+5,2) {$(p,p,p)$};

    \node [node] (t1 x0 z0) at (+1,0) {N/A};

    \draw (top)--(t0) node[left=0.3,pos=0.35]{$t=0$};
    \draw (top)--(t1) node[right=0.3,pos=0.35]{$t=1$};

    \draw (t0)--(t0 x0) node[left=0.2,pos=0.35]{$x=0$};
    \draw (t0)--(t0 y0) node[fill=white,pos=0.55]{$y=0$};
    \draw (t0)--(t0 z0) node[right=0.2,pos=0.35]{$z=0$};

    \draw (t1)--(t1 x0) node[left=0.2,pos=0.35]{$x=0$};
    \draw (t1)--(t1 z=xy) node[fill=white,pos=0.6]{$z=xy$};
    \draw (t1)--(t1 z=-1) node[right=0.2,pos=0.35]{$z=-1$};

    \draw (t1 x0)--(t1 x0 z0) node[left,pos=0.5]{$z=0$};
\end{tikzpicture}
\end{center}

\section{Limitations}
\label{limit}

Among the Lie $p$-rings of order $p^7$ in the LiePRing package there is
one with 13 parameters and the following presentation:

\begin{verbatim}
gap> ViewPCPresentation(L);
p*l1 = j*l5 + k*l6 + m*l7
p*l2 = n*l5 + r*l6 + s*l7
p*l3 = t*l5 + u*l6 + v*l7
p*l4 = x*l5 + y*l6 + z*l7
[l2,l1] = l5
[l3,l1] = l6
[l3,l2] = l7
[l4,l2] = w*l6
[l4,l3] = l5
\end{verbatim}

This Lie $p$-ring is currently beyond the range of our algorithm and it
is a main reasons why we limited our application to the groups of order 
dividing $p^6$.

\section{The groups of orders dividing $2^6$ and $3^6$}
\label{res}

The following table lists the numbers of groups of order $p^n$ with 
prescribed Schur multiplier for $n \leq 6$ and $p=2$ and $p=3$. We
only list those orders for which the result differs from the table in
the introduction. The information in this table can be computed readily
using GAP \cite{GAP} and we list it for completness only.

\[\def\arraystretch{1.3}\rowcolors{2}{gray!10}{white}
\begin{array}{lrrrrrr}
\toprule
M                  & 2^3 & 2^4 & 2^5 & 2^6& 3^5 & 3^6 \\
\midrule
\{0\}                & 2 & 4 & 5 &  9 & 5  &   6   \\ 
\Z_p                 & 2 & 3 & 12& 33 & 19 &  44  \\
\Z_p^2               & 1 & 3 & 14& 56 & 10 & 123  \\
\Z_p^3               &   & 2 & 6 & 41 & 14 & 132  \\
\Z_p^4               &   &   & 2 & 33 & 4  &  55  \\
\Z_p^5               &   &   & 4 & 33 & 4  &  44   \\ 
\Z_p^6               &   & 1 & 2 &  7 & 2  &  32   \\
\Z_p^7               &   &   &   &  2 & 1  &   4    \\ 
\Z_p^8               &   &   &   &    &    &   4    \\ 
\Z_p^9               &   &   &   &  5 &    &   6    \\
\Z_p^{10}            &   &   & 1 &  2 & 1  &   1    \\
\Z_p^{11}            &   &   &   &    &    &   1    \\
\Z_p^{15}            &   &   &   &  1 &    &   1    \\
\Z_{p^2}             &   & 1 & 1 & 10 & 4  &  15    \\
\Z_p + \Z_{p^2}      &   &   & 2 & 14 & 2  &  12    \\
\Z_p^2 + \Z_{p^2}    &   &   & 2 &  9 & 1  &  10    \\
\Z_p^3 + \Z_{p^2}    &   &   &   &  3 &    &   8    \\
\Z_p^4 + \Z_{p^2}    &   &   &   &  3 &    &   1    \\
\Z_p^5 + \Z_{p^2}    &   &   &   &  2 &    &   1    \\
\Z_{p^2}^2           &   &   &   &    &    &   1    \\
\Z_p + \Z_{p^2}^2    &   &   &   &  1 &    &   1    \\
\Z_{p^2}^3           &   &   &   &  1 &    &   1    \\
\Z_{p^3}             &   &   &   &  1 &    &   1    \\
\Z_p^2 + \Z_{p^2}^2  &   &   &   &  1 &    &        \\ 
\bottomrule
\end{array}
\]

\def\cprime{$'$} \def\cprime{$'$}


\begin{thebibliography}{10}

\bibitem{EHZ12}
B.~Eick, M.~Horn, and S.~Zandi.
\newblock Schur multipliers and the {L}azard correspondence.
\newblock {\em Arch. Math. (Basel)}, 99(3):217--226, 2012.

\bibitem{ENi06}
B.~Eick and W.~Nickel.
\newblock Computing the {S}chur multiplicator and the nonabelian tensor square
  of a polycyclic group.
\newblock {\em J. Algebra}, 320(2):927--944, 2008.

\bibitem{HKY18}
S.~Hatui, V.~Kakkar, and M.~K. Yadav.
\newblock The {S}chur multipliers of $p$-groups of order $p^5$.
\newblock {\em Arxiv 1804.11308}, 2018.

\bibitem{HZa16}
M.~Horn and S.~Zandi.
\newblock Computing {S}chur multipliers via the {L}azard correspondence.
\newblock {\em unpublished}, 2014.

\bibitem{Laz54}
M.~Lazard.
\newblock Sur les groupes nilpotents et les anneaux de {L}ie.
\newblock {\em Ann. Sci. Ecole Norm. Sup. (3)}, 71:101--190, 1954.

\bibitem{NOV04}
M.~F. Newman, E.~A. O'Brien, and M.~R. Vaughan-Lee.
\newblock Groups and nilpotent {Lie} rings whose order is the sixth power of a
  prime.
\newblock {\em J. Alg.}, 278:383 -- 401, 2003.

\bibitem{OVL05}
E.~A. O'Brien and M.~R. Vaughan-Lee.
\newblock The groups with order {$p\sp 7$} for odd prime {$p$}.
\newblock {\em J. Algebra}, 292(1):243--258, 2005.

\bibitem{Sim94}
C.~C. Sims.
\newblock {\em Computation with finitely presented groups}.
\newblock Cambridge University Press, Cambridge, 1994.

\bibitem{GAP}
{The {\sf GAP} Group}.
\newblock {\em {{\sf GAP} -- Groups, Algorithms and Programming, Version 4.4}}.
\newblock Available from http://www.gap-system.org, 2005.

\bibitem{liepring}
M.~Vaughan-Lee and B.~Eick.
\newblock {\em LiePRing - Database and Algorithms for Lie p-rings}, 2015.
\newblock A {{\sf GAP} 4} package, see \cite{GAP}.

\end{thebibliography}
\end{document}